\newtheorem{theorem}{Theorem}
\newcommand{\lf}{\left}
\newcommand{\rg}{\right}
\newcommand{\R}{\mathbb R}
\newcommand{\N}{\mathbb N}
\newcommand{\bs}{\begin{smallmatrix}}
\newcommand{\es}{\end{smallmatrix}}
\newcommand{\f}{\frac}
\newenvironment{proof}{{\noindent\bf Proof.}}{\hfill$\Box$\\}
\begin{document}

\title{The Banach fixed point principle viewed as a monotone convergence with respect to the Lorentz cone in Euclidean spaces
\thanks{{\it 1991 AMS Subject Classification:} 47H07, 47H10; {\it Key words and phrases:} Banach fixed point principle, ordered Euclidean space, Lorentz cone. }
}
\author{S. Z. N\'emeth\thanks{School of Mathematics, University of Birmingham, Watson Building, Edgbaston, Birmingham B15 2TT, United Kingdom, 
email: {\tt s.nemeth@bham.ac.uk}}
}
\date{}
\maketitle

\begin{abstract}
	Who would have thought that a purely metrical result such as the Banach fixed point principle can be viewed as a monotone convergence? 
	The aim of this paper is to both shock and entertain the reader by presenting a surprising connection between the the Banach fixed 
	point principle and the ordering structure of the Euclidean space ordered by the Lorentz cone. The Lorentz cone is already a very
	important concept for theoretical physicists and recently for the optimization community as well, but probably very little (if any) is 
	known about its connection with fixed point principles or ordered vector spaces. In more technical terms this is what we are going to
	show: We augment the dimension of the Euclidean space by one and the Picard iteration of a contraction by a simple iteration on the real 
	line such that the resulting iteration becomes monotone increasing and bounded with respect to the order defined by the Lorentz cone of 
	the augmented space.  
\end{abstract}

\section{Introduction}
Probably most of the readers would think that the Banach fixed principle is purely metrical in nature and has nothing to do with the ordering of
vectors. To the contrary, we will exhibit an unexpected connection between the Banach fixed point principle and the Euclidean space ordered by a
Lorentz cone, cone which is already well known to theoretical physicists and relatively recently to the optimization community as well (via
second order cone programming), but little about this cone (if any) has been told related to fixed point theorems or vector orders. How will we 
do this? ``To get to the icing 
on the cake'', we need some terminology and reminders first: Let $m$ be a positive integer, $p=m+1$ and $\|\cdot\|$ the Euclidean norm in $\R^p$.
A \emph{cone} $K\subset\R^p$ is a closed set such that $tK+sK\subset K$ and $K\cap(-K)=\{0\}$ for any $t,s\ge0$ \cite{Boyd2004}.  The 
\emph{order} $\le_K$ induced by the cone $K$ is defined by the equvalence $x\le_K y\iff y-x\in K$. This order is reflexive, transitive and 
antisymmetric. Moreover, $\le_K$ is \emph{compatible with the linear structure} of $\R^p$, that is, $x\le_K y$ implies that $\mu x+z\le\mu y+z$, 
for any $\mu\ge0$ and any $z\in\R^p$. The pair $(\R^p,K)$ is called an \emph{ordered Euclidean space} and $K$ the \emph{positive cone} of 
$(\R^p,K)$. A sequence in $\R^p$ will be called \emph{$K$-increasing ($K$-bounded from above)} if it is increasing (bounded from above) with 
respect to $\le_K$. A \emph{lower $K$-bound} of a set in $\R^p$ is a lower bound of the set with respect to $\le_K$. It is known that any
$K$-increasing and $K$-bounded sequence from above in $\R^p$ is convergent \cite{McArthur1970}. Let $0<\lambda<1$. A 
\emph{$\lambda$-contraction} $F:\R^p\to\R^p$ is a mapping $F:\R^p\to\R^p$ such that $\|F(x)-F(y)\|\le\lambda\|x-y\|$ for any $x,y\in\R^p$. It is 
easy to see that a contraction can have at most one fixed point. The \emph{Banach fixed point theorem (or principle)} 
(see \cite{Banach1922,Granas2003}) states that any contraction has a unique fixed point and the \emph{Picard iteration} $x^{n+1}=F(x^n)$ from any
starting point is convergent to this fixed point. And now the ``ice on the cake'': Define the sequence $t^n$ by $t^0=0$ and 
$t^{n+1}=\lambda t^n+\|x^1-x^0\|$ such that $t^{n+1}-t^n$ to be at least $\|x^{n+1}-x^{n}\|$, or equivalently $(x^n,t^n)$ to be monotone with 
respect to the \emph{Lorentz cone} (see Example 2.3 on page 31 of \cite{Boyd2004}) $L$ in $\R^m\times\R$ defined by 
$L=\{(x,t)\in\R^m\times\R:t\ge\|x\|\}$. The construction of $t^n$ also provides the $L$-boundedness from above of $(x^n,t^n)$, which therefore it 
is convergent. More details will follow in the next section.
\section{The main result}

\begin{theorem}
	Let $m$ be a positive integer, $L$ be the Lorentz cone in $\R^m\times\R$, $0<\lambda<1$, $x^0\in\R^m$ and $f:\R^m\to\R^m$ be a 
	$\lambda$-contraction. Consider the Picard iteration
	\begin{equation}\label{ex}
		x^{n+1}=f(x^n),\textrm{ }n\in\N,
	\end{equation}
	starting from $x^0$, the iteration 
	\begin{equation}\label{et}
		t^{n+1}=\lambda t^n+\|x^1-x^0\|
	\end{equation}
	starting from $t^0=0$ and the nonempty set
	\begin{equation}\label{eo}
		\Omega=\lf\{(x,t)\in\R^m\times\R:t\ge\|x-x^0\|,
		t\ge\f{\|x^1-x^0\|+\|f(x)-x\|}{1-\lambda}\rg\}.
	\end{equation}
	Then, the sequence $(x^n,t^n)$ is $L$-increasing and $L$-bounded frome above by any element
	of $\Omega$, hence it is convergent. Its limit $(x^*,t^*)$ is a lower $L$-bound of $\Omega$ with $x^*$ the unique fixed point of $f$. 
\end{theorem}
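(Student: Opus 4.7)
The plan is to peel off the conclusions of the theorem one by one, and to reduce the two cone-inclusion claims to explicit scalar inequalities that follow from the contraction estimate and the closed form of $t^n$.

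First I would record two elementary estimates. From the contraction hypothesis and induction on $n$, the Picard differences satisfy $\|x^{n+1}-x^n\|\le\lambda^n\|x^1-x^0\|$. From the recursion \eqref{et}, subtracting consecutive terms gives $t^{n+1}-t^n=\lambda(t^n-t^{n-1})$ with $t^1-t^0=\|x^1-x^0\|$, hence $t^{n+1}-t^n=\lambda^n\|x^1-x^0\|$ and the closed form $t^n=\|x^1-x^0\|\,\frac{1-\lambda^n}{1-\lambda}$. Comparing these two yields $t^{n+1}-t^n\ge\|x^{n+1}-x^n\|$, which is exactly the statement that $(x^{n+1},t^{n+1})-(x^n,t^n)\in L$; thus $(x^n,t^n)$ is $L$-increasing.

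For $L$-boundedness by an arbitrary $(x,t)\in\Omega$, I need $t-t^n\ge\|x-x^n\|$ for every $n$. The key inequality I would establish by induction (or by iterating the one-step bound) is
\[
\|x-x^n\|\le\frac{(1-\lambda^n)\|f(x)-x\|}{1-\lambda}+\lambda^n\|x-x^0\|,
\]
obtained from $\|x-x^n\|\le\|x-f(x)\|+\lambda\|x-x^{n-1}\|$. Combining with the closed form for $t^n$, the desired inequality $t-t^n\ge\|x-x^n\|$ becomes
\[
t\ge(1-\lambda^n)\cdot\frac{\|x^1-x^0\|+\|f(x)-x\|}{1-\lambda}+\lambda^n\|x-x^0\|,
\]
i.e.\ $t$ dominates a convex combination (with weights $1-\lambda^n$ and $\lambda^n$) of the two quantities appearing in the definition \eqref{eo} of $\Omega$. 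Since $(x,t)\in\Omega$ dominates each of them, it dominates the convex combination, and the bound holds. This is the step I expect to be the main technical obstacle, because it is where the precise form of the set $\Omega$ is essential; everything hinges on recognising the convex combination.

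The remaining conclusions follow quickly. Nonemptiness of $\Omega$ is witnessed, for instance, by $(x^0,2\|x^1-x^0\|/(1-\lambda))$. $L$-increasingness together with $L$-boundedness from above then implies convergence of $(x^n,t^n)$ to some $(x^*,t^*)$ by the cited monotone convergence fact for the Lorentz order. Passing to the limit in $(x,t)-(x^n,t^n)\in L$ and using closedness of $L$ shows $(x^*,t^*)\le_L(x,t)$ for every $(x,t)\in\Omega$, so $(x^*,t^*)$ is a lower $L$-bound of $\Omega$. Finally, $f$ is continuous, so sending $n\to\infty$ in \eqref{ex} gives $x^*=f(x^*)$, and uniqueness of the fixed point is already noted in the introduction as a consequence of the contraction property.
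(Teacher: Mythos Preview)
Your argument is correct, and it reaches the same conclusions as the paper, but the route is organised differently. The paper proves both claims by one-step induction that never unrolls the recursion: for monotonicity it uses the hypothesis $t^{n+1}-t^n\ge\|x^{n+1}-x^n\|$ directly in the step via $\|x^{n+2}-x^{n+1}\|\le\lambda\|x^{n+1}-x^n\|\le\lambda(t^{n+1}-t^n)=t^{n+2}-t^{n+1}$; for boundedness it inducts on $t-t^n\ge\|x-x^n\|$ using the algebraic identity $t-t^{n+1}=(1-\lambda)t+\lambda(t-t^n)-\|x^1-x^0\|$ and the second defining inequality of $\Omega$ in the form $(1-\lambda)t\ge\|x^1-x^0\|+\|f(x)-x\|$. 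You instead compute the closed forms $t^{n+1}-t^n=\lambda^n\|x^1-x^0\|$ and $t^n=\|x^1-x^0\|(1-\lambda^n)/(1-\lambda)$, together with the explicit bound $\|x-x^n\|\le(1-\lambda^n)\|f(x)-x\|/(1-\lambda)+\lambda^n\|x-x^0\|$, and then observe that the required inequality is a convex combination of the two conditions defining $\Omega$. Your approach is a little more computational but has the merit of \emph{explaining} the shape of $\Omega$: the two inequalities are precisely what is needed so that $t$ majorises every convex combination with weights $(1-\lambda^n,\lambda^n)$. The paper's version is leaner and avoids the closed forms entirely. Your treatment of nonemptiness, convergence, passage to the limit, and the fixed-point identification is also complete (indeed slightly more explicit than the paper's, which leaves the verification $x^*=f(x^*)$ implicit).
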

\begin{proof}
	First, we will show by induction that \[(x^n,t^n)\le_L (x^{n+1},t^{n+1}),\] for any $n\in\N$.
	Indeed, we have $t^1-t^0=\|x^1-x^0\|$ and hence $(x^0,t^0)\le_L(x^1,t^1)$. Hence, the 
	statement is true for $n=0$. Now, suppose that the statement is true for $n$, that is,
	$(x^n,t^n)\le_L (x^{n+1},t^{n+1})$. Hence, since $f$ is a $\lambda$-contraction, we get
	\begin{equation}\label{eint}
		\|x^{n+2}-x^{n+1}\|\le\lambda\|x^{n+1}-x^n\|\le\lambda(t^{n+1}-t^n), 
	\end{equation}
	where the second inequality follows from the induction hypothesis. From \eqref{et} and
	\eqref{eint}, we get \[t^{n+2}-t^{n+1}=\lambda(t^{n+1}-t^n)\ge\|x^{n+2}-x^{n+1}\|,\]
	or equivalently $(x^{n+1},t^{n+1})\le_L (x^{n+2},t^{n+2})$, that is, the statement is true
	for $n+1$. Hence, the statement is true for any $n\in\N$.

	Next, consider an arbitrary element $(x,t)\in\Omega$. We will show by induction that
	\[(x^n,t^n)\le_L(x,t).\] From \eqref{eo} we get $t\ge\|x-x^0\|$, which implies 
	$(x^0,t^0)=(x^0,0)\le_L(x,t)$. Hence, the statement is true for $n=0$. Now, suppose that the 
	statement is true for $n$, that is, $(x^n,t^n)\le_L(x,t)$, or equivalently 
	\begin{equation}\label{eih}
		t-t^n\ge\|x-x^n\|.
	\end{equation}
	Then, since $f$ is a $\lambda$-contraction, \eqref{et}, \eqref{eo}, \eqref{eih} and the 
	triangle inequality imply
	\begin{gather*}
		t-t^{n+1}=(1-\lambda)t+\lambda (t-t^n)-\|x^1-x^0\|\ge(1-\lambda)t
		+\lambda\|x-x^n\|-\|x^1-x^0\|\\\ge(1-\lambda)t+\|f(x)-f(x^n)\|-\|x^1-x^0\|
		\ge\|x-f(x)\|+\|f(x)-x^{n+1}\|\\\ge\|x-f(x)+f(x)-x^{n+1}\|=\|x-x^{n+1}\|,
	\end{gather*}
	that is the statement is true for $n+1$. Hence, the statement is true for any $n\in\N$.
	Taking the limit in \eqref{eih} as $n\to\infty$, it follows that $t-t^*\ge\|x-x^*\|$, or
	equivalently $(x^*,t^*)\le_L(x,t)$. Since $(x,t)$ is an arbitrary element of $\Omega$, it
	follows that $(x^*,t^*)$ is a lower $L$-bound of $\Omega$.
\end{proof}

\bibliographystyle{abbrv}
\bibliography{banach}

\end{document}